\date{}
\newtheorem{theorem}{Theorem}
\newtheorem{corollary}{Corollary}
\newtheorem{remark}{Remark}
\begin{document}

\title{{\bf $L_p +L_q$ and $L_p \cap L_q$ are not isomorphic \newline for all $1 \leq p, q \leq \infty$, $p \neq q$} \\
{\bf \large $L_p + L_q$ et $L_p \cap L_q$ ne sont pas isomorphes pour tout \newline $1 \leq p, q \leq \infty $, $p \neq q$}}

\author {Sergey V. Astashkin$^{a, *}$ and Lech Maligranda$^{b}$}

\date{}

\maketitle

$^a$Department of Mathematics, Samara National Research University, Moskovskoye 

shosse 34, 443086, Samara, Russia

$^b${Department of Engineering Sciences and Mathematics, Lule\r{a} University of Technology, 

SE-971 87 Lule\r{a}, Sweden

\renewcommand{\thefootnote}{\fnsymbol{footnote}}

\noindent
\footnotetext[0]{{\it E-mail addresses:} {\tt astash56@mail.ru} (S. V. Astashkin), {\tt lech.maligranda@ltu.se} (L. Maligranda)}

\footnotetext[1]{Research was supported by the Ministry of Education and Science of the Russian
Federation, project 1.470.2016/1.4, and by the RFBR grant 17-01-00138.}

\begin{abstract}
\noindent {We prove that if $1 \leq p, q \leq \infty$, then the spaces $L_p +L_q$ and $L_p \cap L_q$ are 
isomorphic if and only if $p = q$. In particular, $L_2 +L_{\infty}$ and $L_2 \cap L_{\infty}$ are not isomorphic which 
is an answer to a question formulated in \cite{AM}.}
\end{abstract}

\hspace{7cm}{\bf R\'esum\'e}
\begin{quote}
{\small Nous prouvons que si $1 \leq p, q \leq \infty$, alors les espaces $L_p +L_q$  et $L_p \cap L_q$ 
sont isomorphes si et seulement si $p = q$. En particulier, $L_2 +L_{\infty}$ et $L_2 \cap L_{\infty}$ 
ne sont pas isomorphes, ce qui est une réponse \`a une question formul\'ee dans \cite{AM}.}
\end{quote}

\section{Preliminaries and main result}

Isomorphic classification of symmetric spaces is an important problem related to the study of symmetric structures 
in arbitrary Banach spaces (cf. \cite{JMST}). A number of very interesting and deep results of such a sort is 
proved in the seminal work of Johnson, Maurey, Schechtman and Tzafriri \cite{JMST}. In particular, in \cite{JMST} 
(see also \cite[Section 2.f]{LT}) it was shown that the space $L_2 \cap L_p$ for $2 \leq p < \infty$ (resp. $L_2+L_p$ 
for $1 < p \leq 2$) is isomorphic to $L_p$. A further investigation of various properties of \textit{separable} sums and 
intersections of $L_p$-spaces (i.e., with $p<\infty$) was continued by Dilworth in \cite{Di88} and \cite{Di90} and by 
Dilworth and Carothers in \cite{DC91}. In contrast to that, in the paper \cite{AM} we proved that \textit{nonseparable} 
spaces $L_p +L_{\infty}$ and $L_p \cap L_{\infty}$ for all $1\leq p<\infty$ and $p \neq 2$ are not isomorphic. 
The question for $p = 2$ was our motivation to continue this work. Here, we give a solution of this problem and, 
basing on results of \cite{JMST} and \cite{AM}, prove a more general theorem: $L_p +L_q$ and $L_p \cap L_q$ 
for all $1 \leq p, q \leq \infty$ are isomorphic if and only if $p = q$.
\vspace{2mm}
 
In this paper we use the standard notation from the theory of symmetric spaces (cf. \cite{BS}, \cite{KPS} and \cite{LT}). 
Let $L_p(0, \infty)$ be the usual Lebesgue space of $p$-integrable functions $x(t)$ equipped with the norm
\begin{equation*}
\| x \|_{L_p} = \left( \int_0^{\infty} |x(t)|^p dt \right)^{1/p} ~~ (1 \leq p < \infty)
\end{equation*}
and $\| x\|_{L_{\infty}} = {\rm ess \,sup}_{t > 0} |x(t)|$. For $1 \leq p, q \leq \infty$ the space $L_p +L_q$ consists of all 
sums of $p$-integrable and $q$-integrable measurable functions on $(0, \infty)$ with the norm defined by
\begin{equation*}
\| x \|_{L_p + L_q}: = \inf_{x(t) = u(t) + v(t), \,u \in L_p, v \in L_q} \left( \| u \|_{L_p} + \| v \|_{L_q} \right).
\end{equation*}
The space $L_p \cap L_q$ consists of all both $p$- and $q$-integrable functions on $(0, \infty)$ with the norm
\begin{equation*}
\| x \|_{L_p \cap L_q}: = \max \left \{ \| x \|_{L_p}, \| x \|_{L_q} \right \} =  \max \left \{\big(\int_0^{\infty} | x(t)|^p\, dt \big)^{1/p}, 
\big(\int_0^{\infty} | x(t)|^q\, dt\big)^{1/q} \right \}.
\end{equation*}
$L_p +L_q$ and $L_p \cap L_q$ for all $1 \leq p, q \leq \infty$ are symmetric Banach spaces (cf. \cite[p. 94]{KPS}). 
They are separable if and only if both $p$ and $q$ are finite (cf. \cite[p. 79]{KPS} for $p = 1$).

The norm in $L_p +L_q$ satisfies the following estimates
$$
\Big(\int_0^1 x^*(t)^p \, dt\Big)^{1/p}+\Big(\int_1^\infty x^*(t)^q \, dt\Big)^{1/q} \leq \| x \|_{L_p + L_q}\le$$
$$\leq C_{p,q} \Big(\Big(\int_0^1 x^*(t)^p \, dt\Big)^{1/p}+\Big(\int_1^\infty x^*(t)^q \, dt\Big)^{1/q}\Big) 
$$
if $1\le p<q<\infty$, and
\begin{equation*} \label{1}
\Big(\int_0^1 x^*(t)^p \, dt\Big)^{1/p} \leq \| x \|_{L_p + L_{\infty}} \leq C_p \Big(\int_0^1 x^*(t)^p \, dt\Big)^{1/p} 
\end{equation*}
if $1\le p<\infty$
(cf. \cite[p. 109]{BL}, \cite[Thm 4.1]{Ho70} and \cite[Example 1]{Ma84}). Here, $x^*(t)$ denotes the decreasing rearrangement 
of $| x(u)|$, that is, 
\begin{equation*}
 x^*(t) = \inf \{\tau>0 \colon m(\{u > 0 \colon |x(u)| > \tau\}) < t \} 
\end{equation*}
(if $E\subset\mathbb{R}$ is a measurable set, then $m(E)$ is its Lebesgue measure). Note that every 
measurable function and its decreasing rearrangement are equimeasurable, that is,
$$
m(\{u > 0 \colon |x(u)| > \tau\})=m(\{t > 0 \colon |x^*(t)| > \tau\})$$
for all $\tau>0$.
\vspace{2mm}

Now, we state the main result of this paper.

\begin{theorem} \label{Thm1}
For every $1 \leq p, q \leq \infty$ the spaces $L_p +L_q$ and $L_p \cap L_q$ are isomorphic if and only if $p = q$.
\end{theorem}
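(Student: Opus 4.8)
The plan is the following. The ``if'' direction is immediate, since $p=q$ gives $L_p+L_q=L_p=L_p\cap L_q$. For the ``only if'' direction assume $p\ne q$; by the symmetry $L_p+L_q=L_q+L_p$ and $L_p\cap L_q=L_q\cap L_p$ we may assume $p<q$. The backbone of the argument is a duality reduction. For $1<p<q<\infty$ both spaces are reflexive, and writing $p',q'$ for the conjugate exponents one has the Köthe--Banach identities $(L_p\cap L_q)^*=L_{p'}+L_{q'}$ and $(L_p+L_q)^*=L_{p'}\cap L_{q'}$ (now with $q'<p'$); when $q=\infty$ the analogous identities hold only at the level of \emph{preduals}, $L_p\cap L_\infty=(L_{p'}+L_1)^*$ and $L_p+L_\infty=(L_{p'}\cap L_1)^*$ for $1<p<\infty$, both preduals being separable with order-continuous norm. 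Hence, for $q<\infty$, an isomorphism $L_p+L_q\cong L_p\cap L_q$ forces $L_{q'}\cap L_{p'}\cong L_{q'}+L_{p'}$, so the finite problem is \emph{self-dual}; for $q=\infty$ one instead works through the separable preduals, as explained below.

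Using this, I would organise the proof around a few core cases. First, when $q=\infty$ and $p\ne 2$, the non-isomorphism is exactly the result of \cite{AM}, which I would simply quote. Second, when one exponent equals $2$ and the other lies in $(1,\infty)$, the theorems of Johnson--Maurey--Schechtman--Tzafriri recorded above ($L_2\cap L_r\cong L_r$ for $2\le r<\infty$ and $L_2+L_r\cong L_r$ for $1<r\le 2$) collapse one of the two spaces to an ordinary $L_r$, so that the task reduces to deciding whether the companion space $L_2+L_r$ (resp. $L_2\cap L_r$) is also isomorphic to $L_r$; I would settle this in the negative using the structure theory of complemented subspaces of $L_r$ in \cite{JMST}. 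The remaining finite cases (in particular the boundary pair $L_1+L_2$ versus $L_1\cap L_2$, which is \emph{not} covered by \cite{JMST}) and, decisively, the case $p=2$, $q=\infty$ left open in \cite{AM}, require a direct argument.

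The engine for these direct cases is an isomorphic invariant that detects the \emph{asymmetry} of the two constructions. For $p<q$ the sum $L_p+L_q$ behaves like $L_p$ on sets of finite measure and like $L_q$ at infinity, so that disjoint sequences of small blocks span $\ell_p$ while disjoint sequences of large blocks span $\ell_q$; for the intersection these two roles are exactly interchanged. The plan is to turn this local/tail asymmetry into a linear-topological statement---for instance about which of $\ell_p,\ell_q$ arises as the range of a conditional-expectation projection of a prescribed type---and to show the two patterns are incompatible. For the crux case $p=2$, $q=\infty$ I would exploit the predual identities $L_2+L_\infty=(L_1\cap L_2)^*$ and $L_2\cap L_\infty=(L_1+L_2)^*$: first prove the separable statement $L_1+L_2\not\cong L_1\cap L_2$ by the asymmetry invariant, and then lift it back through duality.

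The main obstacle is precisely the exponent $p=2$. The method of \cite{AM} for $p\ne 2$ rests on type/cotype invariants, and these degenerate at $p=2$ because $L_2$ is Hilbertian; worse, all of the usual coarse invariants coincide for the two spaces, which share the same (non-)separability, the same type and cotype, and each contains complemented copies of $\ell_2$, of $c_0$, and (in the non-separable cases) of the injective space $\ell_\infty$. A genuinely finer, asymmetry-detecting invariant is therefore unavoidable. The second difficulty is the transfer step itself: deducing $L_1+L_2\cong L_1\cap L_2$ from a hypothetical isomorphism $L_2+L_\infty\cong L_2\cap L_\infty$ requires controlling the a priori non-unique isomorphic preduals of these dual spaces, so either a predual-uniqueness argument or a self-contained non-separable version of the invariant must be supplied.
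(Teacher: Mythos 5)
Your scaffolding is partly sound (the ``if'' direction, the reduction to $p<q$, quoting \cite{AM} for $q=\infty$, $p\neq 2$), but the two load-bearing steps are missing. First, the crux case $p=2$, $q=\infty$ is not actually proved: you propose to establish the separable statement $L_1+L_2\not\cong L_1\cap L_2$ and then ``lift it back through duality'' to $L_2+L_\infty=(L_1\cap L_2)^*$ versus $L_2\cap L_\infty=(L_1+L_2)^*$. But duality only transfers isomorphisms from spaces to their duals, not the other way: non-isomorphic Banach spaces can have isomorphic duals, and you acknowledge yourself that a predual-uniqueness argument ``must be supplied'' --- it never is. The ``asymmetry invariant'' that is supposed to drive the direct cases is likewise only described in spirit, never constructed. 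The paper goes in the opposite direction: it proves $L_2+L_\infty\not\cong L_2\cap L_\infty$ \emph{directly} (Theorem~\ref{Thm2}), by testing a hypothetical isomorphism $T$ on normalized characteristic functions of dyadic subintervals of $(n-1,n]$. Either some subsequence of images has $L_2$-norms tending to $0$, which after a small-perturbation argument produces a complemented copy of $\ell_2$ inside $L_\infty(A)$ with $m(A)<\infty$ (impossible), or a Rademacher/parallelogram-law averaging yields signs for which $\bigl\|\sum_{n=1}^m f_n\bigr\|_{L_2+L_\infty}=1$ for all $m$ while $\bigl\|\sum_{n=1}^m f_n\bigr\|_{L_2+L_\infty}^2\ge (c/\|T\|)^2\, m$, a contradiction. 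The separable case $p=1$, $1<q<\infty$ is then deduced from this by the \emph{legitimate} direction of duality (an isomorphism of the spaces would force an isomorphism of their duals $L_{q'}\cap L_\infty$ and $L_{q'}+L_\infty$).

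Second, for $1<p<q<\infty$ your observation that the problem is ``self-dual'' resolves nothing, and the JMST collapse $L_2\cap L_r\cong L_r$, $L_2+L_r\cong L_r$ only touches the subcases where one exponent equals $2$; the general finite case is left entirely to the unconstructed invariant. The paper instead proves a clean general statement (Theorem~\ref{Thm3}): two reflexive Orlicz spaces $L_M(0,\infty)$ and $L_N(0,\infty)$ can be isomorphic only if $M$ and $N$ are equivalent for $u\ge 1$, which follows from the Lindenstrauss--Tzafriri/JMST theorem identifying $L_M(0,1)$ with $L_N(0,1)$ once the former embeds complementably in $L_N(0,\infty)$ and is not Hilbertian. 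Since $\min(u^p,u^q)$ and $\max(u^p,u^q)$ are not equivalent for $u\ge 1$ when $p\neq q$, all reflexive cases follow at once. To close your proof you would need either this Orlicz-space argument or an actual construction of the asymmetry invariant you describe; as written, the proposal identifies the right difficulties but does not overcome them.
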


If $\{x_n\}_{n=1}^\infty$ is a sequence from a Banach space $X$, by $[x_n]$ we denote its closed linear span in $X$. As usual,
the Rademacher functions on $[0, 1]$ are defined as follows: $r_k(t) = {\rm sign}
(\sin 2^k \pi t), ~ k \in {\Bbb N}, t \in [0, 1]$.

\section{ $L_2 + L_{\infty}$ and $L_2 \cap L_{\infty}$ are not isomorphic}

Let $x$ be a measurable function on $(0,\infty)$ such that $m(\text{supp} \,x)\le 1$. Then, clearly, $x$ is equimeasurable 
with the function $x^*\chi_{[0,1]}$. Therefore, assuming that $x\in L_2$ (resp. $x\in L_\infty$), we have 
$x\in L_2+L_\infty$ and $\|x\|_{L_2+L_\infty}=\|x\|_{L_2}$ (resp. $x\in L_2\cap L_\infty$ and $\|x\|_{L_2\cap L_\infty}=\|x\|_{L_\infty}$).

\begin{theorem} \label{Thm2}
The spaces $L_2 + L_{\infty}$ and $L_2 \cap L_{\infty}$ are not isomorphic.
\end{theorem}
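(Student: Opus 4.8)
The plan is to argue by contradiction, assuming an isomorphism $T\colon L_2\cap L_\infty \to L_2+L_\infty$ and extracting from it two incompatible structural features. The natural starting point is the remark preceding the statement: the functions supported on $[0,1]$ constitute an isometric copy of $L_\infty[0,1]\cong\ell_\infty$ inside $L_2\cap L_\infty$ and an isometric copy of $L_2[0,1]\cong\ell_2$ inside $L_2+L_\infty$. In both cases the restriction $x\mapsto x\chi_{[0,1]}$ is a norm-one projection: for $L_2\cap L_\infty$ this is immediate from $\|x\chi_{[0,1]}\|_{L_2\cap L_\infty}\le\|x\|_{L_2\cap L_\infty}$, and for $L_2+L_\infty$ it follows from the lower estimate $\big(\int_0^1 x^*(t)^2\,dt\big)^{1/2}\le\|x\|_{L_2+L_\infty}$ together with $\int_0^1 (x\chi_{[0,1]})^2\le\int_0^1 x^*{}^2$. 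Thus $L_2+L_\infty$ carries a complemented $\ell_2$ at the unit scale, while $L_2\cap L_\infty$ carries a complemented $\ell_\infty$ there; this $\ell_2$/$\ell_\infty$ dichotomy between the two ``cores'' is the qualitative asymmetry I would try to promote into a contradiction.

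Concretely, I would manufacture inside each space a complemented subspace of recognizable isomorphic type and then show that one type cannot be complemented in the other. A natural construction places rescaled Rademacher systems $r_1^{(n)},\dots,r_n^{(n)}$ on a sequence of disjoint measure-one blocks $\Delta_n$ marching off to infinity. On a block of measure one the remark gives $\|\sum_k a_k r_k^{(n)}\|_{L_2+L_\infty}=\|\sum_k a_k r_k^{(n)}\|_2=\big(\sum_k a_k^2\big)^{1/2}$, so the block spans $\ell_2^n$ in $L_2+L_\infty$; whereas $\|\sum_k a_k r_k^{(n)}\|_{L_2\cap L_\infty}=\|\sum_k a_k r_k^{(n)}\|_\infty=\sum_k|a_k|$, so the same block spans $\ell_1^n$ in $L_2\cap L_\infty$. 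Blockwise conditional-expectation operators supply the required projections, and the blocks are assembled across scales according to the respective norms. The aim is to realize in one space a complemented copy of a concrete structured sum of $\ell_2^n$'s (resp. $\ell_1^n$'s) and in the other its ``transpose'', and to separate the two using the complementation theory of subspaces of $L_p$-spaces from \cite{JMST} together with the invariants developed in \cite{AM}.

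The main obstacle, and the reason the case $p=q=2$ resisted the methods of \cite{AM}, is that essentially every elementary isomorphic invariant fails to separate the two spaces: both contain $\ell_1$, $c_0$ and $\ell_\infty$, both contain complemented copies of $\ell_2$ and of $\ell_\infty$, both are isometric to dual spaces with separable preduals (namely $L_2+L_\infty=(L_2\cap L_1)^*$ and $L_2\cap L_\infty=(L_2+L_1)^*$), both have trivial type and trivial cotype since they contain $\ell_\infty^n$ uniformly, and both fail the Dunford--Pettis property because each has a complemented infinite-dimensional reflexive subspace. Hence no local invariant and none of the standard global dichotomies suffice, and the distinguishing property must be a delicate infinite-dimensional one. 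I therefore expect the decisive and hardest step to be the verification that the structured complemented subspace carrying the $L_\infty$-scale in $L_2\cap L_\infty$ genuinely cannot occur as a complemented subspace of $L_2+L_\infty$, whose structure is governed by the $L_2$-scale; for this I would apply the precise blockwise complementation results of \cite{JMST} to convert the qualitative $\ell_2$/$\ell_\infty$ asymmetry of the cores into a quantitative contradiction with $T$ being an isomorphism.
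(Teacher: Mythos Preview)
Your structural observations are correct and well put: both spaces contain complemented copies of $\ell_2$ and of $\ell_\infty$, both are duals with separable preduals, and the usual local/global invariants do not separate them. Your Rademacher-on-blocks calculation is also right: on a block of measure one the Rademachers span $\ell_2^n$ in $L_2+L_\infty$ and $\ell_1^n$ in $L_2\cap L_\infty$. But the proposal has a genuine gap precisely at the point you flag as ``decisive and hardest'': you never say \emph{which} complementation theorem from \cite{JMST} would finish the argument, nor what the candidate complemented subspace is, nor why its isomorphic type is forbidden in the other space. Since you yourself observe that each space already contains complemented $\ell_2$, complemented $\ell_\infty$, and (via your blocks) uniformly complemented $\ell_1^n$'s and $\ell_2^n$'s, it is not at all clear that any ``structured sum'' you can build in one space fails to be complemented in the other. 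Without a concrete obstruction, the plan does not close.

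The paper's argument is both more elementary and more direct than the route you sketch; it needs no heavy machinery from \cite{JMST}. Assume $T\colon L_2+L_\infty\to L_2\cap L_\infty$ is an isomorphism and look at the images $y_{k,i}^n=T(2^{k/2}\chi_{\Delta_{k,i}^n})$ of normalized dyadic indicators. The proof is a dichotomy on their $L_2$-norms. If some sequence $\|y_{k,i_k}^{n_k}\|_{L_2}\to 0$, a small-perturbation argument truncates these images to functions supported on a set $A$ of finite measure; since on such a set $L_2\cap L_\infty$ coincides with $L_\infty$, one obtains a complemented copy of $\ell_2$ inside $L_\infty(A)\cong\ell_\infty$, contradicting the classical fact that $\ell_\infty$ has no infinite-dimensional complemented reflexive subspace. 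If instead $\|y_{k_0,i}^n\|_{L_2}\ge c$ for some fixed $k_0$ and all $n,i$, one chooses signs $\theta_i^n$ via the parallelogram identity so that $g_n=\sum_i\theta_i^n v_{k_0,i}^n$ has $\|g_n\|_{L_2}\ge c$, while the preimages $f_n=\sum_i\theta_i^n u_{k_0,i}^n$ satisfy $|\sum_{n\le m}f_n|=\chi_{(0,m]}$ and hence $\|\sum_{n\le m}f_n\|_{L_2+L_\infty}=1$ for all $m$. Averaging $\|\sum_{n\le m} r_n(t)g_n\|_{L_2}^2$ over $t$ and using unconditionality of $\{f_n\}$ forces $\|\sum_{n\le m} f_n\|_{L_2+L_\infty}^2\gtrsim m$, a contradiction. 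The engine is thus not a JMST-type classification but the simple tension between the $L_\infty$-part of the target norm (which localizes to finite measure and rules out complemented $\ell_2$) and its $L_2$-part (which, if uniformly present, grows under Rademacher averaging faster than the $L_2+L_\infty$-norm of disjoint unimodular blocks allows).
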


\begin{proof} On the contrary, assume that $T$ is an isomorphism of $L_2 + L_{\infty}$ onto $L_2 \cap L_{\infty}$. 

For every $n, k \in \mathbb N$ and $i = 1, 2, \ldots, 2^k$ we set
\begin{equation*}
\Delta_{k, i}^{n} = (n - 1 + \frac{i-1}{2^k}, n - 1 + \frac{i}{2^k}], \, u_{k, i}^{n}: = \chi_{\Delta_{k, i}^{n}}, \, v_{k, i}^{n}: = T(u_{k, i}^{n}).
\end{equation*}
Clearly, $\| u_{k, i}^{n}\|_{L_2+L_{\infty}} = 2^{-k/2}$. Therefore, if $x_{k, i}^{n} = 2^{k/2} u_{k, i}^{n}, y_{k, i}^{n} = 2^{k/2} v_{k, i}^{n}$, then
$\| x_{k, i}^{n}\|_{L_2+L_{\infty}} = 1$ and
\begin{equation} \label{1}
\| T^{-1}\|^{-1} \leq \| y_{k, i}^{n}\|_{L_2 \cap L_{\infty}} = \max (\| y_{k, i}^{n}\|_{L_2}, \| y_{k, i}^{n}\|_{L_{\infty}}) \leq \| T\|
\end{equation}
for all $n, k \in {\mathbb N}, i = 1, 2, \ldots, 2^k$.

At first, we suppose that for each $k \in \mathbb N$ there are $n_k \in \mathbb N$ and $1 \leq i_k \leq 2^k$ such that
\begin{equation} \label{2}
\| y_{k, i_k}^{n_k}\|_{L_2} \rightarrow 0 ~~ \text{as} ~~ k \rightarrow \infty.
\end{equation}
Denoting $\alpha_k:= x_{k, i_k}^{n_k}$ and $\beta_k:= y_{k, i_k}^{n_k}$, observe that $m(\bigcup_{k=1}^{\infty} \text{supp} \, \alpha_k) = 1$ 
and so the sequence $\{\alpha_k\}_{k=1}^{\infty}$ is isometrically equivalent in $L_2+L_{\infty}$ to the unit vector basis of $l_2$ and 
$[\alpha_k]$ is a complemented subspace of $L_2+L_{\infty}$. Then, since $\beta_k = T(\alpha_k), k = 1, 2, \ldots$, the sequence 
$\{\beta_k\}_{k=1}^{\infty}$ is also equivalent in $L_2 \cap L_{\infty}$ to the unit vector basis of $l_2$. Moreover, if $P$ is a bounded 
projection from $L_2+L_{\infty}$ onto $[\alpha_k]$, then the operator $Q:= TPT^{-1}$ is the bounded projection from $L_2 \cap L_{\infty}$ 
onto $[\beta_k]$. Thus, the subspace $[\beta_k]$ is complemented in $L_2 \cap L_{\infty}$.

Now, let $\varepsilon_k > 0, k =1, 2, \ldots$ and $\sum_{k=1}^{\infty} \varepsilon_k < \infty$ (the choice of these numbers will be 
specified a little bit later). Thanks to (\ref{2}), passing to a subsequence (and keeping the notation), we may assume that
\begin{equation*}
\| \beta_k \|_{L_2} < \varepsilon_k ~~\text{and} ~~ m\{s > 0: | \beta_k(s)| > \varepsilon_k \} < \varepsilon_k, k = 1, 2, \ldots
\end{equation*}
(clearly, this subsequence preserves the above properties of the sequence $\{\beta_k\}$). Hence, denoting
\begin{equation*}
A_k:= \{s > 0: | \beta_k(s)| > \varepsilon_k \} ~~ \text{and} ~~\gamma_k:= \beta_k \chi_{A_k}, k = 1, 2, \ldots,
\end{equation*}
we obtain
\begin{equation*}
\| \beta_k - \gamma_k\|_{L_2 \cap L_{\infty}} \leq \max\{ \| \beta_k \chi_{(0, \infty) \setminus A_k} \|_{L_{\infty}}, \| \beta_k\|_{L_2}\} \leq 
\varepsilon_k, k = 1, 2, \ldots .
\end{equation*}
Thus, choosing $\varepsilon_k$ sufficiently small and taking into account inequalities (\ref{1}), by the principle of small perturbations (cf. 
\cite[Theorem 1.3.9]{AK}), we see that the sequences $\{\beta_k\}$ and $\{\gamma_k\}$ are equivalent in $L_2 \cap L_{\infty}$ and 
the subspace $[\gamma_k]$ is complemented (together with $[\beta_k]$) in the latter space.

Denote $A:= \bigcup\limits_{k=1}^{\infty} A_k$. We have $m(A) \leq \sum\limits_{k=1}^{\infty} m(A_k) \leq \sum\limits_{k=1}^{\infty} \varepsilon_k < \infty$ 
and hence the space
\begin{equation*}
(L_2 \cap L_{\infty})(A):= \{x \in L_2 \cap L_{\infty}: \text{supp} \, x \subset A \}
\end{equation*}
coincide with $L_{\infty}(A)$ (with equivalence of norms). As a result, $L_{\infty}(A)$ contains the complemented subspace $[\gamma_k]$, 
which is isomorphic to $l_2$. Since this is a contradiction with \cite[Theorem 5.6.5]{AK}, our initial assumption on the existence of a sequence 
$\{y_{k, i_k}^{n_k}\}_{k=1}^{\infty}$ satisfying (\ref{2}) fails. 

Thus, there are $c > 0$ and $k_0 \in \mathbb N$ such that 
\begin{equation*}
\| y_{k_0, i}^{n}\|_{L_2} \geq c ~~ \text{for all} ~~ n \in \mathbb N ~~ \text{and} ~~ i = 1, 2, \ldots, 2^{k_0}.
\end{equation*}
Then, by the generalized Parallelogram Law (see \cite[Proposition 6.2.9]{AK}), we have
\begin{equation*}
\int_0^1 \| \sum\limits_{i=1}^{2^{k_0}} r_i(s) y_{k_0, i}^{n} \|_{L_2}^2 ds = \sum\limits_{i=1}^{2^{k_0}} \| y_{k_0, i}^{n} \|_{L_2}^2 
\geq c^2 \, 2^{k_0}, n \in \mathbb N,
\end{equation*}
where $r_i = r_i(s)$ are the Rademacher functions. Hence, there exist $\theta_i^n = \pm 1, n = 1, 2, \ldots, i = 1, 2, \ldots, 2^{k_0}$ 
such that $ \| \sum\limits_{i=1}^{2^{k_0}} \theta_i^n y_{k_0, i}^{n} \|_{L_2} \geq c \, 2^{k_0/2}, n \in \mathbb N$, or equivalently 
$ \| \sum\limits_{i=1}^{2^{k_0}} \theta_i^n v_{k_0, i}^{n} \|_{L_2} \geq c, n \in \mathbb N$. So, setting
\begin{equation*}
f_n:= \sum\limits_{i=1}^{2^{k_0}} \theta_i^n u_{k_0, i}^{n}, \, g_n:= \sum\limits_{i=1}^{2^{k_0}} \theta_i^n v_{k_0, i}^{n}, 
\end{equation*}
we have
\begin{equation} \label{3}
\| f_n\|_{L_2 + L_{\infty}} = 1 ~~\text{and} ~~ \| g_n\|_{L_2} \geq c, ~ n = 1, 2, \ldots .
\end{equation}
Moreover, by the definition of the norm in $L_2 + L_{\infty}$ and the fact that
$$
\Big|\sum\limits_{n=1}^m f_n\Big| = \Big|\sum\limits_{n=1}^m \sum\limits_{i=1}^{2^{k_0}} \theta _i^n u_{k_0, i}^n\Big| 
= \sum\limits_{n=1}^m \sum\limits_{i=1}^{2^{k_0}} \chi_{\Delta_{k_0, i}^{n}} = \chi_{(0,m]},
$$
we obtain 
\begin{equation} \label{4}
\| \sum\limits_{n=1}^m f_n\|_{L_2 + L_{\infty}} = \| f_1\|_{L_2} = 1, \, m = 1, 2, \ldots .
\end{equation}
On the other hand, since $\{f_n\}$ is an $1$-unconditional sequence in $L_2 + L_{\infty}$, for each $t \in [0, 1]$ we have
\begin{equation*}
\| \sum\limits_{n=1}^m f_n\|_{L_2 + L_{\infty}}^2 = \| \sum\limits_{n=1}^m r_n(t) \,f_n\|_{L_2 + L_{\infty}}^2 \geq 
\frac{1}{\| T\|^2}  \| \sum\limits_{n=1}^m r_n(t) \,g_n\|_{L_2 \cap L_{\infty}}^2.
\end{equation*}
Integrating this inequality, by the generalized Parallelogram Law and (\ref{3}), we obtain
\begin{eqnarray*}
\| \sum\limits_{n=1}^m f_n\|_{L_2 + L_{\infty}}^2 
&\geq&
\frac{1}{\| T\|^2} \int_0^1 \| \sum\limits_{n=1}^m r_n(t) \, g_n\|_{L_2 \cap L_{\infty}}^2 dt \geq 
\frac{1}{\| T\|^2} \int_0^1 \| \sum\limits_{n=1}^m r_n(t) \, g_n\|_{L_2}^2 dt \\
& =& 
\frac{1}{\| T\|^2}  \sum\limits_{n=1}^m \| \, g_n\|_{L_2}^2 \geq \Big(\frac{c}{\| T\|}\Big)^2 \cdot m, ~ m = 1, 2, \ldots .
\end{eqnarray*}
Since the latter inequality contradicts (\ref{4}), the proof is completed.
\end{proof}

\begin{remark} \label{Rem1} Using the same arguments as in the proof of the above theorem, we can show that the spaces 
$L_p +L_{\infty}$ and $L_p \cap L_{\infty}$ are not isomorphic for every $1 \leq p < \infty$. This gives a new proof of Theorem 1 
from \cite{AM}. However, note that in the latter paper (see  Theorems 3 and 5) it is proved the stronger result, 
saying that the space $L_p \cap L_{\infty}, p \neq 2$, does not contain any complemented subspace isomorphic to $L_p(0, 1)$.
\end{remark}


\section{ $L_p + L_q$ and $L_p \cap L_q$ are not isomorphic for $1 < p, q < \infty$}

Both spaces $L_p +L_q$ and $L_p \cap L_q$ for all $1 \leq p, q \leq \infty$ are special cases of Orlicz spaces on $(0, \infty)$.

A function $M \colon [0,\infty )\rightarrow [0,\infty ]$ is called a \textit{Young function} (or \textit{Orlicz function} if it is
finite-valued) if $M$ is convex, non-decreasing with $M(0)=0$; we assume also that $M$ is neither identically zero 
nor identically infinity on $(0,\infty )$, $\lim_{u\rightarrow 0+} M(u) = M(0) = 0$ and $\lim_{u\rightarrow \infty } M(u) = \infty $.

The \textit{Orlicz space} $L_M = L_M (I)$ with $I = (0, 1)$ or $I = (0, \infty)$ generated by the Young function $M$ is defined as 
$$
L_M (I) 
= \{ x \in L^0 (I): \rho_{M}(x/\lambda) < \infty \ \text{for some} \ \lambda = \lambda (x) > 0\}, 
$$
where $\rho_{M}(x): = \int_I M( |x(t)|) \,d t$. It is a Banach space with the \textit{Luxemburg--Nakano norm}
$$
\|x\|_{L_M} = \inf \{\lambda > 0: \rho_{M}(x/\lambda) \leq 1 \}
$$
and is a symmetric space on $I$ (cf. \cite{BS}, \cite{KR}--\cite{RR}). Special cases on $I = (0, \infty)$ are the following 
(cf. \cite[pp. 98--100]{Ma89}):

(a) For $1 \leq p, q < \infty$ let $M(u) = \max(u^p, u^q)$, then $L_M = L_p \cap L_q$.

(b) For $1 \leq p < \infty$ let 
\begin{equation*}
M(u) = \left\{ 
\begin{array}{cc}
u^p & \text{if } 0 \leq u \leq 1, \text{ }
\\ 
\infty & \text{if} \, 1 < u < \infty,
\end{array}
\right.  \text{then} ~~ L_M = L_p \cap L_{\infty}.
\end{equation*}

(c) For $1 \leq p, q < \infty$ let $M(u) = \min (u^p, u^q)$, then $M$ is not a convex function on $[0, \infty)$, but 
$M_0(u) = \int_0^u \frac{M(t)}{t} dt$ is convex and $M(u/2) \leq M_0(u) \leq M(u)$ for all $u > 0$, which gives 
$L_M = L_{M_0} = L_p + L_q$.

(d) For $1 \leq p < \infty$ let 
\begin{equation*}
M(u) = \left\{ 
\begin{array}{cc}
0 & \text{if } 0 \leq u \leq 1, \text{ }
\\ 
u^p - 1 & \text{if} \, 1 < u < \infty, 
\end{array}
\right. \text{then} ~~ L_M = L_p + L_{\infty}.
\end{equation*}

A Young (Orlicz) function $M$ satisfies the $\Delta_2$-{\it condition} if $0 < M(u) < \infty$ for $u > 0$ and there exists a constant 
$C \geq 1$ such that $M(2u) \leq C M(u)$ for all $u > 0$. An Orlicz space $L_M(0, \infty)$ is separable if and only if 
$M$ satisfies the $\Delta_2$-condition (cf. \cite[pp. 107--110]{KR}, \cite[Thm 4.2 (b)]{Ma89}, \cite[p. 88]{RR}). To each 
Young function $M$ one can associate another convex function $M^*$, i.e., the {\it complementary function} to $M$, which 
is defined by $M^* (v) = \sup_{u>0} \, [uv - M(u)]$ for $v \geq 0$. Then $M^*$ is also a Young function and $M^{**} = M$. 
An Orlicz space $L_M(0, \infty)$ is reflexive if and only if $M$ and $M^*$ satisfy the $\Delta_2$-condition 
(cf. \cite[Thm 9.3]{Ma89}, \cite[p. 112]{RR}).

\begin{theorem} \label{Thm3} Let $M$ and $N$ be two Orlicz functions on $[0, \infty)$ such that both spaces $L_M(0, \infty)$ and 
$L_N(0, \infty)$ are reflexive. Suppose that $L_M(0, \infty)$ and $L_N(0, \infty)$ are isomorphic. Then, the functions $M$ and $N$ 
are equivalent for $u \geq 1$, that is, there are constants $a, b > 0$ such that $a M(u) \leq N(u) \leq b M(u)$ for all $u \geq 1$.
\end{theorem}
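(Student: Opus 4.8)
The plan is to reduce the claim to a comparison of \emph{fundamental functions} and then to recover that data from the isomorphism by transporting finite-dimensional, disjointly supported structure. Recall that the fundamental function of $L_M(0,\infty)$ is $\phi_M(s):=\|\chi_{(0,s)}\|_{L_M}=1/M^{-1}(1/s)$, so that $\phi_M(1/M(u))=1/u$ for every $u\ge 1$. Consequently, the values of $M$ on $[1,\infty)$ are encoded, up to equivalence, by the values of $\phi_M$ on $(0,1]$: if one shows $a\,\phi_N(s)\le\phi_M(s)\le b\,\phi_N(s)$ for $0<s\le 1$, then $M^{-1}(t)\approx N^{-1}(t)$ for $t\ge1$, and inverting (absorbing the multiplicative constants via the $\Delta_2$-condition, and handling the compact range between $1$ and $N^{-1}(1)$ trivially) yields $aM(u)\le N(u)\le bM(u)$ for $u\ge1$. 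So the target becomes: the isomorphism $T\colon L_M\to L_N$ forces $\phi_M\approx\phi_N$ near $0$.

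First I would record how reflexivity is used. Reflexivity of $L_M$ and $L_N$ is equivalent to $M,N,M^*,N^*$ all satisfying the (global) $\Delta_2$-condition; hence both spaces are separable, have absolutely continuous norm, and are weakly sequentially complete with weakly compact unit ball. This is exactly what makes a gliding-hump argument available: a normalized, disjointly supported sequence is weakly null, so its image under $T$ is weakly null, and by the subsequence-splitting principle in $L_N$ (valid since $N\in\Delta_2$) together with the principle of small perturbations already exploited in the proof of Theorem~\ref{Thm2}, a subsequence of the images can be replaced, up to an arbitrarily small perturbation, by a genuinely disjointly supported normalized sequence in $L_N$.

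Next comes the transfer of the fundamental function. For small $s$ I would take $n$ pairwise disjoint translates of $\chi_{(0,s)}$; their $L_M$-normalized indicators form a $1$-symmetric basic sequence whose finite-dimensional geometry (for instance the norms $\|\sum_{i\le k}\text{(unit vectors)}\|$ for all $k\le n$, which in $L_M$ equal $\phi_M(ks)/\phi_M(s)$) records $\phi_M$ near $0$. Applying $T$ and disjointifying as in the previous paragraph produces a normalized disjointly supported sequence in $L_N$; after passing to a further subsequence so that the supports have comparable measure, this sequence is symmetric and its corresponding norms record $\phi_N$ at some matching scale. Because $\|T\|$ and $\|T^{-1}\|$ bound all equivalence constants \emph{uniformly in} $n$, one obtains a two-sided comparison of $\phi_M$ and $\phi_N$ on $(0,1]$ with absolute constants, which is the desired estimate; here I would invoke the uniqueness of symmetric structure from \cite{JMST} to guarantee that the transferred symmetric sequence is, up to equivalence, the canonical disjoint one in $L_N$ rather than something unrelated.

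The main obstacle is precisely this last point: the fundamental function is \emph{not} an isomorphic invariant for general symmetric spaces, so the whole argument hinges on pinning down the scale correspondence between the structure in $L_M$ and its image in $L_N$ and on preventing mass from escaping to $0$ or to $\infty$ during disjointification. Controlling these requires the full strength of reflexivity (for the weak-compactness and splitting steps) together with the rigidity of symmetric structure in $L_N$ supplied by \cite{JMST} and \cite[Thm.\ 5.6.5]{AK}; once the scales are matched with uniform constants, the passage from uniform finite-dimensional equivalence to the inequality $a\,\phi_N\le\phi_M\le b\,\phi_N$, and thence to $aM\le N\le bM$ on $[1,\infty)$, is routine.
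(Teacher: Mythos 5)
Your plan founders at exactly the point you identify as ``the main obstacle,'' and the obstacle is not overcome. There are two concrete gaps. First, the disjointification step is not available: for a normalized weakly null sequence in a reflexive Orlicz space, the subsequence splitting principle yields a disjointly supported part \emph{plus} an equi-integrable part, and the latter need not be small --- the Rademacher functions in $L_p(0,1)\subset L_p(0,\infty)$, $p\neq 2$, are normalized and weakly null but are not a small perturbation of any disjointly supported sequence (they span $\ell_2$, while normalized disjoint sequences in $L_p$ span $\ell_p$). So the images $T(\chi_{A_i})$ of your disjoint indicators cannot in general be replaced by disjointly supported functions, and the gliding-hump machinery from the proof of Theorem~\ref{Thm2} does not transfer. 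Second, even granting disjoint images, nothing pins down the measures of their supports, so the quantities $\|\sum_{i\le k}(\cdot)\|$ would record $\phi_N$ at scales unrelated to $s$; your appeal to ``uniqueness of symmetric structure'' from \cite{JMST} is not an argument here (and \cite[Thm 5.6.5]{AK}, which concerns complemented copies of $\ell_2$ in $L_\infty$-type spaces, is irrelevant to this step). Since the fundamental function is not an isomorphic invariant of symmetric spaces in general, some structural rigidity theorem must be invoked in a form that actually applies, and your sketch never does so.

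The paper's proof sidesteps sequences entirely and is much shorter: one may assume $M$ is not equivalent to $u^2$ on $[1,\infty)$ (otherwise consider $N$, or there is nothing to prove); then $L_M(0,1)$ is a complemented subspace of $L_M(0,\infty)$ not isomorphic to $L_2$, hence by the isomorphism it is complemented in $L_N(0,\infty)$, and the Lindenstrauss--Tzafriri/JMST uniqueness theorem for complemented Orlicz subspaces \cite[Corollary 2.e.14(ii)]{LT} forces $L_M(0,1)=L_N(0,1)$ with equivalent norms, which is equivalent to $aM(u)\le N(u)\le bM(u)$ for $u\ge 1$. That is the rigidity input your argument is missing; if you want to salvage your approach, you would have to route it through this complementation statement rather than through fundamental functions and disjoint sequences.
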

\begin{proof} If both functions $M$ and $N$ are equivalent to the function $u^2$ for $u \geq 1$, then nothing has to be proved. So, suppose that 
the function $M$ is not equivalent to $u^2$. Then, clearly, $L_M(0, 1)$ is a complemented subspace of $L_M(0, \infty)$ and $L_M(0, 1)$ 
is different from $L_2(0, 1)$, even up to an equivalent renorming. By hypothesis, $L_N(0, \infty)$ contains a complemented subspace 
isomorphic to $L_M(0, 1)$. Then, by \cite[Corollary 2.e.14(ii)]{LT} (see also \cite[Thm 7.1]{JMST}) $L_M(0, 1) = L_N(0, 1)$ up to equivalent 
norm. This implies that $M$ and $N$ are equivalent for $u \geq 1$ (cf. \cite[Thm 8.1]{KR}, \cite[Thm 3.4]{Ma89}).
\end{proof}

\begin{corollary} \label{Cor1} 
Let $1 < p, q < \infty, p \neq q$, then $(L_p + L_q)(0, \infty)$ and $(L_p \cap L_q)(0, \infty)$ are not isomorphic. 
\end{corollary}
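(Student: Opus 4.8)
The plan is to deduce Corollary 1 as a direct application of Theorem 3 by identifying the Orlicz functions associated to the two spaces and verifying that the reflexivity hypotheses hold. From the explicit descriptions in items (a) and (c) of the excerpt, the space $(L_p \cap L_q)(0,\infty)$ is the Orlicz space $L_N$ with $N(u) = \max(u^p, u^q)$, while $(L_p + L_q)(0,\infty)$ is the Orlicz space $L_M$ with $M$ equivalent to $\min(u^p, u^q)$ (after convexifying to $M_0$, which changes nothing up to equivalence of norms). So the first step is simply to record these two identifications.

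Next I would check that both $L_M(0,\infty)$ and $L_N(0,\infty)$ are reflexive, which is the hypothesis needed to invoke Theorem 3. Since $1 < p, q < \infty$, both $\max(u^p, u^q)$ and $\min(u^p, u^q)$ grow polynomially with finite exponents bounded away from $1$ and $\infty$, so both $M$ and $N$ satisfy the $\Delta_2$-condition, and their complementary functions $M^*, N^*$ (which are equivalent to the $\max$/$\min$ of the conjugate powers $u^{p'}, u^{q'}$ with $1 < p', q' < \infty$) also satisfy $\Delta_2$. By the reflexivity criterion stated just before Theorem 3, both Orlicz spaces are reflexive. This lets me apply Theorem 3.

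The heart of the argument is then a contradiction. Suppose $(L_p + L_q)(0,\infty) = L_M$ and $(L_p \cap L_q)(0,\infty) = L_N$ were isomorphic. By Theorem 3, $M$ and $N$ would be equivalent for $u \geq 1$, meaning there exist $a, b > 0$ with $a\,M(u) \leq N(u) \leq b\,M(u)$ for all $u \geq 1$. But for large $u$ we have $M(u) \approx \min(u^p, u^q) = u^{\min(p,q)}$ and $N(u) = \max(u^p, u^q) = u^{\max(p,q)}$, and since $p \neq q$ these have strictly different growth orders. The ratio $N(u)/M(u)$ behaves like $u^{|p-q|} \to \infty$ as $u \to \infty$, so no upper bound $N(u) \leq b\,M(u)$ can hold for all $u \geq 1$. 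This contradiction shows the two spaces cannot be isomorphic.

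I expect the only genuine subtlety — the main place to be careful — to be the verification that the convexified function $M_0$ for $L_p + L_q$ really does retain the asymptotic growth $u^{\min(p,q)}$ for large $u$ rather than being distorted by convexification; but this is controlled by the two-sided estimate $M(u/2) \leq M_0(u) \leq M(u)$ supplied in item (c), so the asymptotics of $\min(u^p, u^q)$ are preserved up to equivalence and the growth-order comparison goes through unchanged. Everything else is routine bookkeeping with Orlicz functions.
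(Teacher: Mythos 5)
Your proposal is correct and follows essentially the same route as the paper: identify the two spaces as Orlicz spaces with generating functions $\min(u^p,u^q)$ and $\max(u^p,u^q)$, note reflexivity, and apply Theorem \ref{Thm3} together with the observation that these functions are not equivalent for $u\ge 1$ when $p\ne q$. The paper states all of this in two sentences; you merely spell out the $\Delta_2$ verification, the ratio $N(u)/M(u)\sim u^{|p-q|}$, and the harmlessness of the convexification $M_0$, all of which are accurate.
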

\begin{proof} For such $p, q$ the Orlicz spaces $(L_p + L_q)(0, \infty)$ and $(L_p \cap L_q)(0, \infty)$ are reflexive,  
and are generated by the Orlicz functions $M(u) = \min (u^p, u^q)$ and $N(u) = \max (u^p, u^q)$ respectively, which are 
not equivalent for $u \geq 1$ whenever $p \neq q$. Thus, by Theorem \ref{Thm3}, these spaces cannot be isomorphic.
\end{proof}

\section{ Proof of Theorem 1}

\begin{proof}[Proof of Theorem \ref{Thm1}] 

We consider four cases.

(a) For $p \in [1, 2) \cup (2, \infty)$ and $q = \infty$ it was proved in \cite[Theorem 1]{AM}. 

(b) For $p = 2$ and $q = \infty$ it is proved in Theorem \ref{Thm2}.  

(c) Let $p = 1$ and $1 < q < \infty$. If we assume that $L_1 + L_q$ and $L_1 \cap L_q$ are isomorphic, then the dual spaces 
will be also isomorphic. The dual spaces are $(L_1 + L_q)^* = L_{q^{\prime}} \cap L_{\infty}$ and 
$(L_1 \cap L_q)^* = L_{q^{\prime}} + L_{\infty}$, where $1/q + 1/q^{\prime} = 1$. By (a) and (b), the spaces 
$L_{q^{\prime}} + L_{\infty}$ and $L_{q^{\prime}} \cap L_{\infty}$ are not isomorphic thus its preduals cannot be isomorphic.

(d) For $1 < p, q < \infty, p \neq q$ it follows from Corollary \ref{Cor1}, and the proof is completed.

\end{proof}


\end{document}